\newcommand{\R}{\mathbb{R}}
\newtheorem{thm}{Theorem}
\newtheorem{lem}[thm]{Lemma}
\newtheorem{fact}{Fact}
\newtheorem{cor}[thm]{Corollary}
\begin{document}

\title{Voting in Agreeable Societies}
\date{}
\author[Berg, Norine, Su, Thomas, Wollan]{Deborah E. Berg, Serguei Norine, Francis Edward Su, Robin Thomas, Paul Wollan}
\maketitle

\begin{quote}
{\em My idea of an agreeable person... is a person who agrees with me.}
\newline
\hspace*{3in} ---Benjamin Disraeli \cite{Disr}
\end{quote}

\section{Introduction}
\label{sec:introduction}

When is agreement possible?  An important aspect of group
decision-making is the question of how a group makes a choice when
individual preferences may differ. Clearly, when making a single group
choice, people cannot all have their ``ideal'' preferences, i.e, 
the options that they most desire, if those ideal preferences are
different. However, for the sake of agreement, 
people may be willing to accept as a group choice an
option that is merely ``close'' to their ideal preferences.

Voting is a situation in which people may behave in this way.  The
usual starting model is a one-dimensional political spectrum, 
with conservative positions on the right and liberal positions on the
left, as in Figure \ref{fig:linear}. 
We call each position on the spectrum a {\em platform} that a
candidate or voter may choose to adopt.  
While a voter may represent her ideal platform by some point $x$ on this
line, she might be willing to vote for a candidate who is positioned
at some point ``close enough'' to $x$, i.e., in an interval about $x$.   

\begin{figure}[h]
  \begin{center}
    \scalebox{.9}{\includegraphics{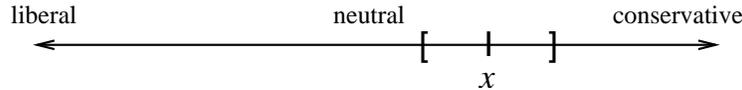}}
  \end{center}
  \caption{A one-dimensional political spectrum, with a single voter's 
  interval of approved platforms.}
  \label{fig:linear}
\end{figure}

In this article, we ask the following: given such preferences on a
political spectrum, when can we guarantee that some fraction 
(say, a majority) of the population will agree on some
candidate?  By ``agree'', we mean in the sense of \emph{approval
voting}, in which voters declare which candidates they find
acceptable.

Approval voting has not yet been adopted for political elections in
the United States.  However, many scientific and mathematical
societies, such as the Mathematical Association of America and the
American Mathematical Society, use approval voting for their
elections.  Additionally, countries other than the United States have
used approval voting or an equivalent system;  for details, see Brams
and Fishburn \cite{brams} who discuss the advantages of approval
voting.

Understanding which candidates can get voter approval can be helpful when there are a large
number of candidates.  An extreme example is the 2003 California
gubernatorial recall election, which had $135$ candidates in the mix
\cite{ca}.  We might imagine these candidates positioned at $135$
points on the line in Figure \ref{fig:linear}, which we think of as a
subset of $\R$.
If each California voter approves of candidates ``close enough'' to
her ideal platform, we may ask under what conditions there is a
candidate that wins the approval of a majority of the voters.

In this setting, we may assume that each voter's set of approved
platforms (her {\em approval set}) is a closed interval in $\R$, and
that there is a set of candidates who take up positions at various
points along this political spectrum.
We shall call this spectrum with a collection of candidates and voters, together
with voters' approval sets, a {\em linear society} (a more precise
definition will be given soon).
We shall say that the linear society is 
{\em super-agreeable} if for every pair of voters
there is some candidate that they would both approve, i.e., each pair of
approval sets contains a candidate in their intersection.
For linear societies this ``local'' condition
guarantees a strong ``global'' property, namely, that there is a
candidate that \emph{every} voter approves!  As we shall see in
Theorem \ref{cor:super-agreeable}, this can be viewed as a consequence of 
Helly's theorem about intersections of convex sets.

But perhaps this is too strong a conclusion.  Is there a weaker 
local condition that would guarantee that only 
a majority (or some other fraction) of the voters would 
approve a particular candidate?
For instance, we relax the condition above and call 
a linear society \emph{agreeable} if 
among every three voters, some pair of voters approve the same candidate.  
Then it is not hard to show:

\begin{thm}
\label{thm:agree-lin-soc}
In an agreeable linear society, there is a candidate who has the
approval of at least half the voters.
\end{thm}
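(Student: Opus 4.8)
The plan is to produce two candidates $p$ and $q$ with the property that every voter approves at least one of them; a single one of $p,q$ then wins the approval of at least half the voters by the pigeonhole principle, which is exactly the desired conclusion. First I would dispose of voters whose approval interval contains no candidate at all: if two such voters existed, then together with any third voter they would form a triple no pair of which approves a common candidate, contradicting agreeability, so there is at most one such voter. If there is exactly one, then pairing it with any other two voters shows that \emph{every} pair of the remaining voters approves a common candidate, so those voters form a super-agreeable society and Theorem~\ref{cor:super-agreeable} hands us a candidate approved by all $n-1$ of them, which is more than half. Hence I may assume every voter approves at least one candidate.

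For the main case, order voters by the right endpoints of their approval intervals and let $v_1$ be one with the smallest right endpoint $b_1$. Write $I_v$ for voter $v$'s approval interval, and let $p$ be the \emph{rightmost} candidate that $v_1$ approves. The key geometric observation is that any voter $u$ who does not approve $p$ must have her entire interval strictly to the right of $p$: since $b_1$ is minimal, $I_u$ extends at least as far right as $p$, so failing to contain $p$ forces its left endpoint to exceed $p$. Let $U$ be the set of voters not approving $p$ (if $U=\varnothing$ we are already done with $p$ alone), let $v_2$ be a voter in $U$ with smallest right endpoint $b_2$, and let $q$ be the rightmost candidate $v_2$ approves; note $q>p$.

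The crux is to show that every voter in $U$ approves $q$. Suppose some $w\in U$ does not; I would derive a contradiction by applying agreeability to the triple $\{v_1,v_2,w\}$ and checking that \emph{no} pair among them approves a common candidate. This is where the choice of $p$ and $q$ as rightmost approved candidates does the work: because $p$ is the rightmost candidate in $I_{v_1}$, there is no candidate in the gap $(p,b_1]$, and likewise none in $(q,b_2]$. Combined with the fact that $v_2$ and $w$ (being in $U$) sit to the right of $p$, and $w$ sits to the right of $q$, each of the three overlaps $I_{v_1}\cap I_{v_2}$, $I_{v_1}\cap I_w$, and $I_{v_2}\cap I_w$ lands inside one of these candidate-free gaps and hence contains no candidate. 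That contradicts agreeability, so $w$ cannot exist and every voter in $U$ approves $q$. Therefore every voter approves $p$ or $q$, and the more popular of the two is approved by at least $n/2$ voters.

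I expect the main obstacle to be the three-pair case analysis in the last paragraph: one must track the relative order of the left and right endpoints of $v_1,v_2,w$ against the thresholds $p$ and $q$ and confirm that each pairwise overlap is pushed into a region known to be candidate-free. Getting this to work hinges entirely on taking $p$ and $q$ to be the \emph{rightmost} approved candidates rather than arbitrary ones; with that choice the verification reduces to comparing a handful of inequalities among endpoints.
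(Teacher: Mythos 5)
Your proof is correct and follows essentially the same route as the paper's: both arguments exhibit two distinguished platforms (the paper takes $x=\max_v \min A_v$ and $y=\min_v \max A_v$ simultaneously, while you make a two-stage greedy choice of rightmost approved candidates) such that every approval set contains at least one of them, using the agreeable condition on a triple of pairwise-disagreeing voters to rule out exceptions and then finishing with the pigeonhole principle. Your only substantive addition is the explicit treatment of a voter who approves no candidate --- a degenerate case the paper's proof passes over, since it imports the nonemptiness of the sets $A_v$ from the super-agreeable setting, where it was deduced from a hypothesis that no longer applies.
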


More generally, call a linear society
\emph{$(k,m)$-agreeable} if it has at least $m$ voters, 
and among every $m$ voters, some subset of $k$ voters approve the same
candidate.  Then our main theorem is a generalization of the previous
result:

\begin{thm}[The Agreeable Linear Society Theorem]
\label{thm:km-agree} 
Let $2\le k\le m$.  
In a $(k,m)$-agreeable linear society of $n$ voters, there is a candidate who has
the approval of at least $n(k-1)/(m-1)$ of the voters.
\end{thm}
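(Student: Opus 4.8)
The plan is to translate the statement into one about the interval graph of the approval sets and then exploit that interval graphs are perfect. First I would record the basic geometric reduction: since the candidates occupy points $c_1<\cdots<c_t$ of the line and each approval set is an interval, the candidates approved by voter $i$ form a block of \emph{consecutive} candidates. Hence a collection of voters approves a common candidate exactly when their approval sets pairwise share a candidate — this is one–dimensional Helly applied to the candidate positions, and the fact that the common intersection of consecutive blocks, if nonempty, contains one of the $c_j$. Writing $d$ for the largest number of voters approving a single fixed candidate, this says precisely that $d=\omega(G)$, where $G$ is the graph on the voters in which two voters are adjacent when they approve some common candidate, and that $G$ is an interval graph.

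Next I would use that interval graphs are perfect, so $G$ admits a proper colouring with $\chi(G)=\omega(G)=d$ colours; equivalently, the $n$ voters split into $d$ classes, each class consisting of voters who pairwise approve no common candidate. (A direct greedy colouring of the intervals from left to right produces such a colouring without invoking the full perfect graph theorem.) Let $U$ be the union of the $k-1$ largest colour classes. Because the average of the $k-1$ largest among $d$ nonnegative numbers summing to $n$ is at least $n/d$, we get $|U|\ge (k-1)n/d$. Moreover $U$ is covered by $k-1$ independent classes, so no $k$ voters of $U$ can all approve the same candidate.

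The finish is a pigeonhole step against $(k,m)$-agreeability. If $|U|\ge m$, then choosing any $m$ voters inside $U$ and applying agreeability would force some $k$ of them to approve a common candidate, contradicting the previous sentence; hence $|U|\le m-1$. Combining this with $|U|\ge (k-1)n/d$ gives $d\ge n(k-1)/(m-1)$, which is the claim, since some candidate is approved by exactly $d$ voters. (One checks there really are at least $k-1$ colour classes to choose from: any $m$ voters contain a $k$-clique, so $\omega(G)\ge k$.) The step I expect to be the crux is the first one, namely pinning down that approving a common \emph{candidate} — not merely having overlapping approval intervals — corresponds to a clique in an interval graph, so that Helly and perfectness can be invoked cleanly; once that identification is in place, the colouring-plus-averaging argument is routine.
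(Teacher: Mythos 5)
Your proposal is correct, and its overall strategy is the same as the paper's: pass to the agreement graph, observe it is an interval graph and hence perfect, and convert the $(k,m)$-agreeability hypothesis into a lower bound on the chromatic number, which equals the clique number and, by the one-dimensional Helly property (Fact~\ref{fac1}, via Theorem~\ref{cor:super-agreeable}), equals the number of voters approving some single candidate. The one place you genuinely diverge is the final counting step. You take the union $U$ of the $k-1$ largest colour classes, get $|U|\ge (k-1)n/\chi$ by averaging and $|U|\le m-1$ because $U$ contains no $k$-clique, which yields exactly the stated bound $\chi=\omega\ge n(k-1)/(m-1)$; your side remarks (that $\omega\ge k$ so at least $k-1$ classes exist, and that voters with empty approval sets are harmless isolated vertices) cover the needed edge cases. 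The paper's Lemma~\ref{nlrrho} instead writes $m-1=(k-1)q+\rho$ with $0\le\rho\le k-2$ and uses the bound $|C_1\cup\cdots\cup C_{k-1}|\le m-1$ to conclude that \emph{every} colour class beyond the largest $k-1$ has size at most $q$, giving $\omega\ge\lceil (n-\rho)/q\rceil$. That quantity is at least $n(k-1)/(m-1)$ and, when $\rho>0$ and $n>m-1$, strictly larger before rounding; the paper also shows it is best possible via an explicit family of interval societies. So your argument suffices for Theorem~\ref{thm:km-agree} as stated, but it does not recover the sharper Theorem~\ref{thm:clique}; what it buys in exchange is a slightly cleaner pigeonhole that avoids the division-with-remainder bookkeeping.
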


We prove a slightly more general
result in Theorem~\ref{thm:clique} and also briefly study societies
whose approval sets are convex subsets of $\R^d$.

As an example, consider a city with 
fourteen restaurants along its main boulevard:
$$
A\ B\ C\ D\ E\ F\ G\ H\ I\ J\ K\ L\ M\ N
$$
and suppose every resident dines only at the five restaurants 
closest to his/her house (a set of consecutive restaurants, e.g., $DEFGH$).
A consequence of Theorem \ref{thm:agree-lin-soc} is that there must be
a restaurant that is patronized by at least half the residents.  Why?
The pigeonhole principle guarantees that among every 3 residents, each
choosing 5 of 14 restaurants, there must be a restaurant approved by
at least 2 of them; hence this linear society is agreeable 
and Theorem \ref{thm:agree-lin-soc} applies.
For an example of Theorem \ref{thm:km-agree}, see Figure \ref{fig:candidates}, which shows  
a $(2,4)$-agreeable linear society, 
and indeed there are candidates that receive at least $1/3$ of the votes (in this case $\lceil 7/3 \rceil = 3$).

We shall begin with some definitions, and explain connections to  classical convexity theorems, graph colorings, and maximal cliques in graphs.  Then we prove Theorem \ref{thm:km-agree}, discuss extensions to higher-dimensional spectra, and conclude with some questions for further study.

\section{Definitions}%
\label{sec:defs}

In this section, we fix terminology and explain 
the basic concepts upon which our results rely.  Let us suppose that the
set of all possible preferences is modeled by a set $X$, called the
\emph{spectrum}.  Each element of the spectrum is a \emph{platform}.
Assume that there is a finite set $V$ of \emph{voters}, and each voter
$v$ has an \emph{approval set} $A_v$ of platforms.

We define a {\em society} $S$ to be a triple $(X, V, \mathcal A)$
consisting of a spectrum $X$, a set of
voters $V$, and a collection $\mathcal A$ of approval sets for all the
voters.  Of particular interest to us will be the case of a 
{\em linear society}, in which $X$ is a closed subset of $\R$ 
and approval sets in $\mathcal A$ are of the form $X \cap I$ where $I$
is either empty or a closed bounded interval in $\R$.
In general, however, $X$ could be any set and the collection $\mathcal A$
of approval sets could
be any class of subsets of $X$.  
In Figure~\ref{fig:shaded} we illustrate a linear society, 
where for ease of display we have separated the
approval sets vertically so that they can be distinguished.

\begin{figure}[thb]
  \begin{center}
    \scalebox{1}{\includegraphics{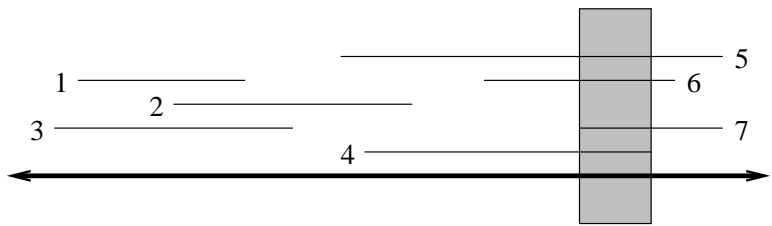}}
  \end{center}
  \caption{A linear society with infinite spectrum: each interval
  (shown here displaced above the spectrum) 
  corresponds to the approval set of a voter.  The shaded region
  indicates platforms with agreement number 4.  
  This is a $(2,3)$-agreeable society.}
  \label{fig:shaded}
\end{figure}

Our motivation for considering intervals as approval sets arises from
imagining that voters have an ``ideal'' platform along a linear scale
(similar to Coombs' $J$-scale \cite{coombs}), and that voters are willing
to approve ``nearby'' platforms, yielding approval sets that are
connected intervals. Unlike the Coombs scaling theory, however, we are not
concerned with the order of preference of approved platforms; all platforms within
a voter's approval set have equivalent status as ``approved'' by that
voter.  
We also note that while we model our linear scale as a subset of $\R$,
none of our results about linear societies depends on the metric;
we only appeal to the ordinal properties of $\R$.

We have seen that politics provides natural examples of linear
societies.  For a different example, $X$ could represent a 
temperature scale, $V$ a set of people that live in a house, and each
$A_v$  a range of temperatures that 
person $v$ finds comfortable.  
Then one may ask: at what temperature should the thermostat be set so as 
to satisfy the largest number of people?

\begin{figure}[htb]
  \begin{center}
    \scalebox{1}{\includegraphics{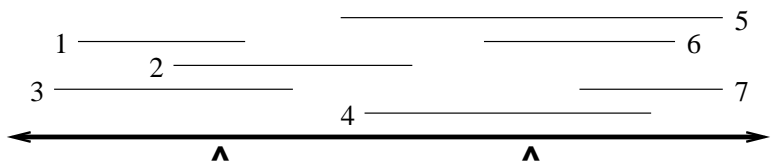}}
  \end{center}
  \caption{A linear society with a spectrum of two candidates (at
    platforms marked by carats): take the approval sets of the society
    of Figure \ref{fig:shaded} and intersect with these candidates.
    It is a $(2,4)$-agreeable linear society.}
  \label{fig:candidates}
\end{figure}

Two special cases of linear societies are worth mentioning.  When
$X=\R$ we should think of $X$ as an infinite spectrum of platforms
that potential candidates might adopt.  However, in practice there are
normally only finitely many candidates.  
We model that situation by
letting $X$ be the set of platforms adopted by actual candidates.
Thus one could think of $X$ as either the set of all platforms, or the
set of (platforms adopted by) candidates.
See Figures \ref{fig:shaded} and \ref{fig:candidates}.

Let $1\leq k\leq m$ be integers.  
Call a society $(k,m)$-\emph{agreeable}
if it has at least $m$ voters, and for any subset of $m$
voters, there is at least one platform that at least $k$ of them can
agree upon, i.e., there is a point common to at least $k$ of the
voters' approval sets.  Thus to be  $(2,3)$-agreeable 
is the same as to be {\em agreeable}, 
and to be $(2,2)$-agreeable is the same as to be
{\em super-agreeable}, as defined earlier.

One may check that the society of Figure \ref{fig:shaded} is 
$(2,3)$-agreeable.  It is not $(3,4)$-agreeable, however, 
because among voters $1,2,4,7$ no three of
them share a common platform.  The same society, after restricting the
spectrum to a set of candidates, is
the linear society shown in Figure \ref{fig:candidates}.
It is not $(2,3)$-agreeable, because among voters
$2,4,7$ there is no pair that can agree on a candidate  
(in fact, voter $7$ does not approve any candidate).  However, one may
verify that this linear society is $(2,4)$-agreeable.

For a society $S$, the \emph{agreement number} of a
platform, $a(p)$, is the number of voters in $S$ who
approve of platform $p$.  
The \emph{agreement number} $a(S)$ of a society 
$S$ is the maximum agreement number over all platforms in the
spectrum, i.e., 
$$a(S) = \max_{p \in X} a(p).$$
The \emph{agreement proportion} of $S$ is simply the agreement number of
$S$ divided by the number of voters of $S$.  This concept is
useful when we are interested in percentages of the population
rather than the number of voters. 
The society of Figure~\ref{fig:shaded} has agreement number~4, which 
can be seen where the 
shaded rectangle covers the set of platforms that have maximum agreement number.

\section{Helly's Theorem and Super-Agreeable Societies}
\label{sec:helly}

Let us say that a society is $\R^d$-\emph{convex} if the spectrum is
$\R^d$ and each approval set is a closed convex subset of $\R^d$.
Note that an $\R^1$-convex society is a linear society with spectrum
$\R$.  
An $\R^d$-convex society can arise when considering a multi-dimensional
spectrum, such as when evaluating political platforms over several
axes (e.g., conservative vs.~liberal, pacifist vs.~militant, interventionist
vs.~isolationist).  Or, the
spectrum might be arrayed over more personal dimensions: the dating
website \textit{eHarmony} claims to use up to 29 of them \cite{eharmony}.
In such situations, the convexity of approval
sets might, for instance, follow from an independence-of-axes
assumption and convexity of approval sets along each axis.  

To find the agreement proportion of an $\R^d$-convex society, we 
turn to work concerning intersections of convex sets.
The most well-known result in this area is Helly's theorem.
This theorem was proven by Helly
in 1913, but the result was not published until 1921, by Radon
\cite{radon}.

\begin{thm}[Helly]
Given $n$ convex sets in $\R^d$ where $n > d$, if every $d + 1$ of
them intersect at a common point, then they all intersect at a
common point.
\end{thm}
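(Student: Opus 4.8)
The plan is to proceed by induction on the number $n$ of convex sets, using as the essential engine a classical partition result, \emph{Radon's theorem}: any collection of $d+2$ points in $\R^d$ can be split into two groups whose convex hulls share a common point. The base case $n=d+1$ is immediate, since the hypothesis already asserts that the (unique) collection of $d+1$ sets has a common point.

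For the inductive step, suppose the conclusion holds for every family of $n-1$ sets satisfying the hypothesis, and let $C_1,\dots,C_n$ be convex sets in $\R^d$ with $n\geq d+2$ such that every $d+1$ of them meet. First I would, for each index $i$, delete $C_i$ and apply the inductive hypothesis to the remaining $n-1$ sets (which still satisfy the $(d+1)$-wise intersection property), obtaining a point
$$p_i \in \bigcap_{j\neq i} C_j.$$
This produces $n\geq d+2$ points $p_1,\dots,p_n$, to which Radon's theorem applies.

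Next I would invoke Radon's theorem to partition the index set $\{1,\dots,n\}$ into two disjoint parts $I$ and $J$ so that the convex hulls $\operatorname{conv}\{p_i : i\in I\}$ and $\operatorname{conv}\{p_j : j\in J\}$ contain a common point $q$. I claim $q$ lies in \emph{every} $C_\ell$. Indeed, if $\ell\in I$, then for each $j\in J$ we have $\ell\neq j$, so $p_j\in C_\ell$; hence all the points indexed by $J$ lie in the convex set $C_\ell$, forcing $\operatorname{conv}\{p_j : j\in J\}\subseteq C_\ell$ and therefore $q\in C_\ell$. The symmetric argument handles $\ell\in J$ using the points indexed by $I$. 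Thus $q\in\bigcap_{\ell=1}^n C_\ell$, completing the induction.

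The main obstacle is really Radon's theorem itself, which is where the dimension $d$ enters. I would prove it by linear algebra: the homogeneous system requiring $\sum_i \lambda_i p_i = 0$ together with $\sum_i \lambda_i = 0$ has $d+1$ equations in $d+2$ unknowns, hence admits a nontrivial solution $(\lambda_i)$; separating the indices by the sign of $\lambda_i$ and normalizing yields the desired common point of the two hulls. Everything else is bookkeeping, though one must take care that the partition furnished by Radon genuinely covers all $n$ indices, so that no $C_\ell$ is missed in the final claim.
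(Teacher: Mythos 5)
Your proof is correct: the induction on $n$ is properly set up (deleting each $C_i$ in turn and using the inductive hypothesis to produce the points $p_i$), the Radon partition argument correctly places the Radon point $q$ in every $C_\ell$, and your sketch of Radon's theorem via the underdetermined homogeneous system in $d+1$ equations and $d+2$ unknowns is the standard and correct one. Your worry about the partition covering all $n$ indices is actually harmless: even if you apply Radon only to $d+2$ of the points, any index $\ell$ outside one of the two parts, say outside $J$, satisfies $p_j\in C_\ell$ for all $j\in J$, so $q\in \operatorname{conv}\{p_j: j\in J\}\subseteq C_\ell$ regardless. Note, however, that the paper does not prove Helly's theorem in this generality at all; it cites the literature for general $d$ and proves only the one-dimensional case directly (Theorem \ref{cor:super-agreeable}), by taking $x=\max_v L_v$ and $y=\min_v R_v$ over the endpoints of the intervals and showing $x\le y$. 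That order-theoretic argument is more elementary and, importantly for the paper, works when the spectrum is an arbitrary closed subset of $\R$ rather than all of $\R^d$ (a point the authors emphasize after Corollary \ref{thm:22n}); your Radon-based argument is the one that actually delivers the $d$-dimensional statement as quoted, at the cost of requiring the full ambient space so that convex hulls of the $p_i$ are available.
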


Helly's theorem has a nice interpretation for $\R^d$-convex societies:

\begin{cor}
\label{thm:22n} For every $d\ge 1$, a $(d+1,d+1)$-agreeable
$\R^d$-convex society must contain at least one platform that is
approved by all voters.  
\end{cor}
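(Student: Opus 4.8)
The plan is to derive Corollary~\ref{thm:22n} directly from Helly's theorem by unwinding the definition of $(d+1,d+1)$-agreeability. First I would observe that, by definition, a $(d+1,d+1)$-agreeable $\R^d$-convex society has at least $d+1$ voters, and for \emph{every} subset of $d+1$ voters there is a platform approved by at least $d+1$ of them---but since the subset has exactly $d+1$ members, this means all $d+1$ of those voters approve a common platform. In other words, every $d+1$ of the approval sets have a point in common.

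Next I would set up the hypotheses of Helly's theorem. Let $n = |V|$ be the number of voters, with approval sets $A_{v_1}, \dots, A_{v_n} \subseteq \R^d$, each closed and convex by the definition of an $\R^d$-convex society. The previous step shows that every $d+1$ of these sets intersect in a common point. Helly's theorem requires $n > d$; since $n \ge d+1 > d$, this is satisfied. Applying Helly's theorem then yields a point $p \in \bigcap_{i=1}^n A_{v_i}$, i.e., a platform approved by every voter, which is exactly the conclusion sought.

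The one technical wrinkle---and the step I expect to need the most care---is the boundary case where $n = d+1$ exactly. Here there is only a single $(d+1)$-subset of voters, namely all of them, and $(d+1,d+1)$-agreeability directly asserts that this whole collection shares a common platform, so the conclusion is immediate without invoking Helly at all. For $n > d+1$, the genuine content of Helly's theorem is needed to pass from pairwise-$(d+1)$-wise intersections to a global intersection. I would also double-check that the convexity and closedness assumptions built into the definition of an $\R^d$-convex society supply exactly the hypotheses Helly's theorem demands on the sets $A_{v_i}$, so no additional regularity argument is required. With these pieces in place the proof is essentially a one-line application of Helly once the agreeability condition has been correctly translated.
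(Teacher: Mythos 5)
Your proof is correct and follows exactly the route the paper intends: the paper presents this corollary as an immediate consequence of Helly's theorem (offering no separate proof), and your unwinding of the $(d+1,d+1)$-agreeability condition into the hypothesis of Helly's theorem is precisely that reading. The extra care you give the $n=d+1$ boundary case is fine but unnecessary, since Helly's hypothesis $n>d$ already covers it.
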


Notice that for the corollary to hold for $d>1$ it is important that the
spectrum of an $\R^d$-\emph{convex} society be all of $\R^d$.
However, for $d=1$ that is not necessary, as we now show.

\begin{thm}[The Super-Agreeable Linear Society Theorem]
\label{cor:super-agreeable}
A super-agreeable linear society must contain at 
least one platform that is approved by all voters.
\end{thm}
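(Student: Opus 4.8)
The plan is to reduce the Super-Agreeable Linear Society Theorem to the one-dimensional Helly theorem (equivalently, Corollary~\ref{thm:22n} with $d=1$), while taking care of the subtlety the authors flag: for a linear society the spectrum $X$ need not be all of $\R$, so I cannot simply quote Corollary~\ref{thm:22n} verbatim. Recall that in a linear society each nonempty approval set has the form $A_v = X \cap I_v$, where $I_v$ is a closed bounded interval in $\R$. The key observation is that super-agreeability (i.e., $(2,2)$-agreeability) says that every \emph{pair} of approval sets has a common point in $X$; I want to upgrade this to a single point common to \emph{all} of them.

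First I would discard the degenerate possibility: super-agreeability guarantees that every pair of approval sets intersects, so in particular no approval set is empty. Next, I would pass from the sets $A_v \subseteq X$ to the ambient intervals $I_v \subseteq \R$. Since $A_v = X \cap I_v$ and $A_v \cap A_w \neq \emptyset$ for every pair $v,w$, the intervals $I_v$ also pairwise intersect. Now I would invoke the one-dimensional Helly property for intervals, which is elementary and does not depend on the spectrum: a finite family of closed bounded intervals in $\R$ that pairwise intersect has a common point. The clean way to see this is to set $a = \max_v (\min I_v)$ and $b = \min_v (\max I_v)$; pairwise intersection forces $a \le b$, so the nonempty interval $[a,b]$ is contained in every $I_v$. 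In particular the point $a = \max_v (\min I_v)$ lies in $\bigcap_v I_v$.

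The step that needs genuine care — and the reason the authors remark that the $d=1$ case behaves better than $d>1$ — is showing that this common point can be taken to lie in the spectrum $X$, not merely in $\R$. The point $a = \max_v(\min I_v)$ equals $\min I_{v_0}$ for some voter $v_0$, and since $A_{v_0} = X \cap I_{v_0}$ is nonempty and $I_{v_0} = [\min I_{v_0}, \max I_{v_0}]$, there is at least one spectrum point in $I_{v_0}$; I would argue that the leftmost point of $A_{v_0}$ witnesses the common intersection. Concretely, let $p$ be the smallest element of $A_{v_0}$ (this exists because $A_{v_0}$ is a nonempty closed bounded subset of $\R$). Then $p \ge \min I_{v_0} = a$, and because $p \in A_{v_0} \subseteq I_{v_0}$ we have $p \le \max I_{v_0}$; for any other voter $w$, pairwise intersection of $A_{v_0}$ and $A_w$ together with $p = \min A_{v_0}$ and $\min I_w \le a = \min I_{v_0} \le p$ forces $p \le \max I_w$, so $p \in I_w$, and since $p \in X$ we get $p \in A_w$. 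This exploits exactly the one-dimensional order structure the authors emphasize (no metric needed), and it is where the argument would differ from higher dimensions.

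I expect the main obstacle to be precisely this last point: verifying that a common point of the intervals can be chosen \emph{inside} the spectrum $X$. In $\R^d$ for $d>1$ the analogous Helly intersection point produced by the convex sets can fail to lie in a restricted spectrum, which is why Corollary~\ref{thm:22n} insists the spectrum be all of $\R^d$; the one-dimensional case is rescued by the fact that we can locate the common point as an \emph{endpoint} of one of the approval sets, and endpoints of $A_v$ automatically lie in $X$. Once that is established the theorem follows immediately, since the single point $p$ is then approved by every voter.
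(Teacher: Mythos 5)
Your proposal is correct and takes essentially the same approach as the paper: both arguments locate the common platform as the leftmost point of the approval set whose left endpoint is rightmost, using pairwise intersection to force every other approval set to reach that point, and both exploit the fact that this point is an element of some $A_v$ and hence automatically lies in the spectrum $X$. The paper streamlines this by defining $L_v=\min A_v$ and $R_v=\max A_v$ directly on the compact approval sets, which renders your detour through the ambient intervals $I_v$ (and the subsequent patch to return to $X$) unnecessary, but the underlying idea is identical.
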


We provide a simple proof of this theorem, since the result will be needed 
later.  When the spectrum is all of $\R$, this theorem is just Helly's
theorem for $d=1$;
a proof of Helly's theorem for general $d$ may be found in
\cite{Matousek}.

\begin{proof}
Let $X \subseteq \R$ denote the spectrum.
Since each voter $v$ agrees on at least one platform with every
other voter, we see that the approval sets $A_v$ must be nonempty.
Let $L_v = \min A_v$, $R_v = \max A_v$, and let
$x = \max_v\{L_v\}$ and $y = \min_v\{R_v\}$.
The first two minima/maxima exist because each $A_v$ is compact; the
last two exist because the number of voters is finite.

\begin{figure}[htb]
  \begin{center}
    \scalebox{1}{\includegraphics{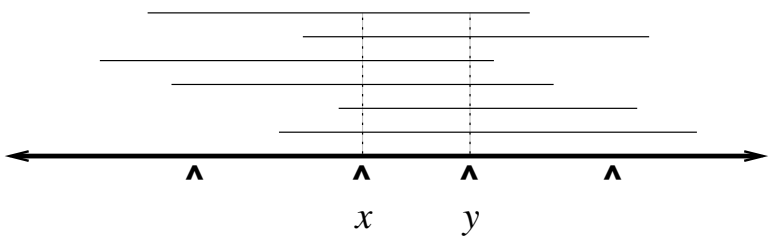}}
  \end{center}
  \caption{A super-agreeable linear society of 6 voters and 4 candidates, with
  agreement number 6.}
  \label{fig:226}
\end{figure}

We claim that $x \leq y$.  Why?  Since every pair of approval sets 
intersect in some platform, we see that 
$L_i \leq R_j$ for every pair of voters $i,j$.
In particular, let $i$ be the voter whose $L_i$ 
is maximal and let $j$ be the voter whose $R_j$ is minimal. 
Hence $x \leq y$ and every approval set contains all platforms of $X$
that are in 
the nonempty interval $[x,y]$, and in particular, the platform $x$.
\end{proof}

The idea of this proof can be easily extended to furnish a proof of 
Theorem \ref{thm:agree-lin-soc}.
\medskip

\noindent
{\em Proof of Theorem \ref{thm:agree-lin-soc}.}
Using the same notations as in the prior proof, 
if $x \leq y$ then that proof 
shows that every approval set contains the platform $x$.  Otherwise
$x > y$ implies $L_i > R_j$ so that $A_j$ and $A_i$ do not 
contain a common platform.

We claim that for any other voter $v$, the approval set $A_v$
contains either platform $x$ or $y$ (or both).  
After all, the society is agreeable, so
some pair of $A_i,A_j,A_v$ must contain a
common platform; by the remarks above it must be that $A_v$
intersects one of $A_i$ or $A_j$.  If $A_v$ does not contain $x=L_i$
then since $L_v \leq L_i$ (by definition of $x$), we must have that 
$R_v < L_i$ and $A_v \cap A_i$ does not contain a platform. 
Then $A_v \cap A_j$ must contain a platform; hence $L_v \leq R_j$.
Since $R_j \leq R_v$ (by definition of $y$), the platform $y=R_j$ must be
in $A_v$.

Thus every approval set contains either $x$ or $y$, and by the
pigeonhole principle one of them must be contained in at least half the
approval sets.~\qed
\medskip

Proving the more general Theorem \ref{thm:km-agree} 
will take a little more work.

\section{The Agreement Graph of Linear Societies is Perfect}%
\label{sec:midgraphs}

To understand $(k,m)$-agreeability, it will be helpful to use a graph
to represent the intersection relation on  approval sets.
Recall that a 
\emph{graph} $G$ consists of a finite set $V(G)$ of \emph{vertices}
and a set $E(G)$ of $2$-element subsets of $V(G)$, called \emph{edges}. If
$e=\{u,v\}$ is an edge, then we say that $u,v$ are the \emph{ends}
of $e$, and that $u$ and $v$ are \emph{adjacent} in $G$.  We use
$uv$ as shorthand notation for the edge $e$.

Given a society $S$, we construct the \emph{agreement graph} $G$ of
$S$ by letting the vertices $V(G)$ be the voters of $S$ and the edges
$E(G)$ be all pairs of voters $u,v$ whose approval sets intersect each
other.  Thus $u$ and $v$ are connected by an edge 
if there is a platform that both $u$ and $v$ would approve.
Note that the agreement graph of a society with agreement number equal
to the number of voters is a complete graph (but the converse
is false in higher dimensions, as we discuss later).
Also note that a vertex $v$ is isolated if $A_v$ is empty or 
disjoint from other approval sets.

\begin{figure}[htb]
  \begin{center}
    \scalebox{1}{\includegraphics{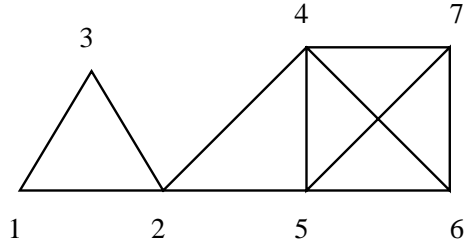}}
  \end{center}
  \caption{The agreement graph for the society in Figure \ref{fig:shaded}.  Note that voters $4,5,6,7$ form a maximal clique that corresponds to the maximal agreement number in Figure \ref{fig:shaded}.}
  \label{fig:7agree}
\end{figure}

The \emph{clique number} of $G$, written $\omega(G)$, is the
greatest integer $q$ such that $G$ has a set of $q$ pairwise
adjacent vertices, called a \emph{clique} of size $q$.
By restricting our attention to members of a clique, and applying
the Super-Agreeable Linear Society Theorem,
we see that there is a platform that has the approval of every member 
of a clique, and hence:

\begin{fact}
\label{fac1}
For the agreement graph of a linear society, 
the clique number of the graph is the agreement number of the society.
\end{fact}

This fact does not necessarily hold if the society is not linear.
For instance, 
it is easy to construct an $\R^2$-convex society with three voters 
such that every two voters agree on a platform, but all three of them
do not.  It does, however hold in $\R^d$ for {\em box societies}, to be discussed in Section \ref{sec:convsoc}.

Now, to get a handle on the clique number, we shall make a connection 
between the clique number and colorings of the agreement graph.
The \emph{chromatic number} of $G$, written $\chi(G)$, is
the minimum number of colors necessary to color the vertices of $G$
such that no two adjacent vertices have the same color.  
Thus two voters may have the same color as long as they do not agree
on a platform.  Note that in all cases, $\chi(G) \geq \omega(G)$.

A graph $G$ is called an \emph{interval graph} if we can assign to every vertex $x$
a closed interval or an empty set $I_x\subseteq \R$ such that $xy \in E(G)$ if
and only if $I_x \cap I_y \neq \emptyset$.  We have:

\begin{fact}\label{fac2}
\rm The agreement graph of a linear society is an interval graph.
\end{fact}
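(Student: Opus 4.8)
The plan is to exhibit an explicit interval representation of the agreement graph $G$. The subtlety to keep in mind is that the approval sets $A_v = X \cap I_v$ are \emph{not} themselves intervals, since $X$ may be an arbitrary closed subset of $\R$; so one cannot simply reuse the defining intervals $I_v$ as labels. Indeed, assigning $I_v$ to vertex $v$ could manufacture a spurious edge, because $I_u \cap I_v$ might be a nonempty interval lying entirely in a ``gap'' of $X$, in which case $A_u \cap A_v = \emptyset$ even though $I_u \cap I_v \neq \emptyset$.

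First I would replace each approval set by its convex hull in $\R$. For a voter $v$ with $A_v \neq \emptyset$, the set $A_v = X \cap I_v$ is closed and bounded, hence compact, so (exactly as in the proof of Theorem~\ref{cor:super-agreeable}) $L_v = \min A_v$ and $R_v = \max A_v$ exist; I set $J_v = [L_v, R_v]$, and $J_v = \emptyset$ when $A_v = \emptyset$. Each $J_v$ is a closed interval (possibly degenerate) or the empty set, so these are legitimate labels for an interval representation in the sense of the definition given. A preliminary observation I would record is that $A_v = X \cap J_v$: since $J_v \subseteq I_v$, any point of $X$ lying in $J_v$ already lies in $X \cap I_v = A_v$, and conversely $A_v \subseteq J_v$ by the definitions of $L_v$ and $R_v$.

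It then remains to verify that $uv \in E(G)$ if and only if $J_u \cap J_v \neq \emptyset$, i.e.\ that the hull-intervals reproduce exactly the adjacency condition $A_u \cap A_v \neq \emptyset$. One direction is immediate: any common point of $A_u$ and $A_v$ lies in $J_u \cap J_v$ because $A_w \subseteq J_w$. The reverse direction is the crux, and it is where the fix above pays off. Assuming $J_u \cap J_v \neq \emptyset$, I would take, without loss of generality, $L_u \le L_v$; overlap of $[L_u,R_u]$ and $[L_v,R_v]$ then forces $L_v \le R_u$, so that $L_v \in J_u$. The key point is that $L_v = \min A_v$ is a genuine element of $X$, whence $L_v \in X \cap J_u = A_u$ by the preliminary observation, while trivially $L_v \in A_v$; thus $L_v \in A_u \cap A_v$ and $uv \in E(G)$. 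The main (and essentially only) obstacle is precisely this last step: one must ensure that an overlap of the two hulls is witnessed by an \emph{actual} platform of $X$, and this is guaranteed exactly because the endpoints of each $J_v$ are the extreme points of $A_v$ and therefore belong to $X$.
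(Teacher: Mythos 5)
Your proof is correct and takes essentially the same approach as the paper: the paper replaces each $I_v$ by ``a minimal closed interval satisfying $A_v = X\cap I_v$,'' which is exactly your convex hull $J_v=[\min A_v,\max A_v]$, and then asserts that these intervals represent the agreement graph. You have simply written out the verification (in particular, that an overlap of two hulls is witnessed by an endpoint, which is an actual point of $X$) that the paper leaves implicit.
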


To see that Fact \ref{fac2} holds let the linear society be
$(X,V,\mathcal A)$, and let the voter approval sets be $A_v=X \cap
I_v$, where $I_v$ is a closed bounded interval or empty.  We may
assume that each $I_v$ is a minimal closed interval satisfying 
$A_v = X \cap I_v$; then the intervals $\{ I_v: v\in V \}$ provide an
interval representation of the agreement graph, as desired.

An \emph{induced subgraph} of a graph $G$ is a graph $H$ such that
$V(H)\subseteq V(G)$ and the edges of $H$ are the edges of $G$ 
that have both ends in $V(H)$. 
If every induced subgraph $H$ of a graph $G$ satisfies
$\chi(H) = \omega(H)$, then $G$ is called a \emph{perfect graph}; see, e.g., 
\cite{RamRee}.
The following is a standard fact \cite{West} about interval graphs:

\begin{thm}
\label{thm:perfect}
Interval graphs are perfect.
\end{thm}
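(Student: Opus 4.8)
The plan is to establish the equivalent-looking claim that every interval graph $G$ satisfies $\chi(G)=\omega(G)$, from which perfection follows at once. First I would observe that every induced subgraph $H$ of an interval graph is again an interval graph: simply retain the intervals $I_x$ already assigned to the vertices of $H$, and note that the defining adjacency condition $xy\in E(H)\iff I_x\cap I_y\neq\emptyset$ is inherited unchanged. Hence it suffices to prove $\chi(G)=\omega(G)$ for an arbitrary interval graph $G$, since applying this single equality to each induced subgraph $H$ is precisely the definition of perfection.

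Because $\chi(G)\ge\omega(G)$ holds for every graph, the real work is to exhibit a proper coloring of $G$ that uses only $\omega(G)$ colors. For this I would use a greedy coloring in a carefully chosen order. Write each vertex's interval as $I_x=[L_x,R_x]$ (any vertex with empty interval is isolated and may be given color $1$ at the end), and order the remaining vertices $v_1,v_2,\dots$ so that $L_{v_1}\le L_{v_2}\le\cdots$, breaking ties arbitrarily. Then color the vertices one at a time in this order, always assigning $v_i$ the least positive integer not yet appearing on an earlier neighbor.

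The key step, and the heart of the argument, is to bound the number of already-colored neighbors of each vertex. When we reach $v_i$, consider any earlier neighbor $v_j$ with $j<i$. Since $v_j$ precedes $v_i$ we have $L_{v_j}\le L_{v_i}$, and since $v_iv_j$ is an edge the two intervals meet; together these force $L_{v_i}\le R_{v_j}$, so the point $L_{v_i}$ lies in $I_{v_j}$. Thus $v_i$ and all of its earlier neighbors correspond to intervals that share the single point $L_{v_i}$, so they pairwise intersect and form a clique of $G$. Consequently $v_i$ has at most $\omega(G)-1$ earlier neighbors, some color in $\{1,\dots,\omega(G)\}$ is always free, and the greedy procedure never introduces an $(\omega(G)+1)$-st color. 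This gives $\chi(G)\le\omega(G)$, hence equality. I expect the observation that every earlier neighbor of $v_i$ must contain the point $L_{v_i}$ to be the one substantive point; the rest is straightforward bookkeeping.
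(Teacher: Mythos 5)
Your proof is correct and takes essentially the same approach as the paper: both rest on the observation that, for a suitably chosen vertex, the relevant neighbors' intervals all contain a common endpoint and hence form a clique of size at most $\omega(G)$. The paper phrases this as an induction that deletes the vertex with smallest right endpoint, while you phrase it as a greedy sweep by increasing left endpoints; these are the same argument up to reflecting the line and unwinding the recursion.
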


\begin{proof}
Let $G$ be an interval graph, and for $v\in V(G)$, let $I_v$ be the
interval representing the vertex $v$. Since every induced subgraph
of an interval graph is an interval graph, it suffices to show that
$\chi (G)=\omega$, where $\omega=\omega (G)$.
We proceed by
induction on $|V(G)|$. The assertion holds for the null graph, and
so we may assume that $|V(G)|\ge 1$, and that the statement holds
for all smaller graphs. Let us select a vertex $v\in V(G)$ such that
the right end of $I_v$ is as small as possible.  It follows that
the elements of
$N$, the set of neighbors of $v$ in $V(G)$, are pairwise adjacent because their
intervals must all contain the right end of $I_v$, and hence
$|N|\le\omega-1$.  See Figure \ref{fig:iviw}.
By the inductive hypothesis, the graph $G\backslash \{v\}$ obtained
from $G$ by deleting $v$ can be colored using $\omega$ colors, and
since $v$ has at most $\omega-1$ neighbors, this coloring can be
extended to a coloring of $G$, as desired.
\end{proof}

\begin{figure}[htb]
  \begin{center}
    \scalebox{.5}{\includegraphics{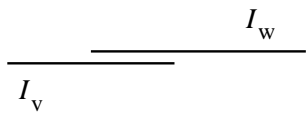}}
  \end{center}
  \caption{If $I_v, I_w$ intersect and the 
      right end of $I_v$ is smaller than the right end of $I_w$, 
      then $I_w$ must contain the right end of $I_v$.}
  \label{fig:iviw}
\end{figure}

The perfect graph property will allow us, in the next section, to make
a crucial connection between the $(k,m)$-agreeability condition and
the agreement number of the society.  Given its importance in our
setting, it is worth making a few comments about how perfect graphs
appear in other contexts in mathematics, theoretical computer science,
and
operations research.  The concept was introduced in 1961 by
Berge~\cite{BerSPGC}, who was motivated by a question in
communication theory, specifically, the determination of the Shannon
capacity of a graph \cite{Shannon}. Chv\'atal later discovered that a
certain class of linear programs always have an integral solution if
and
only if the corresponding matrix arises from a perfect graph in a
specified
way \cite{ChvPoly, RamRee, ThoPerfSurv}.  As pointed out in
\cite{RamRee}, algorithms to solve semi-definite programs grew out of
the theory of perfect graphs.  It has been proven recently
\cite{ChuRobSeyThoSPGC} that a graph is perfect if and only if it has
no induced subgraph isomorphic to a cycle of odd length at least five,
or a complement of such a cycle.

\section{$(k,m)$-Agreeable Linear Societies}
\label{sec:km}

We now use the connection between perfect graphs, the clique
number, and the chromatic number to obtain a lower bound
for the agreement number of a $(k,m)$-agreeable linear society
(Theorem \ref{thm:clique}).
We first need a lemma that says that
in the corresponding agreement graph, the $(k,m)$-agreeable condition 
prevents any coloring of the graph from having too many vertices of
the same color.  Thus, there must be many colors and, since the graph is
perfect, the clique number must be large as well.

\begin{lem}\label{nlrrho}
Given integers $m\geq k\geq 2$, 
let positive integers $q, \rho$ be defined by
the division with remainder: $m-1 =(k-1)q+\rho$, where
$0 \leq \rho \leq k-2$.
Let $G$ be a graph on $n\ge m$ vertices
with chromatic number $\chi$ such that every subset of $V(G)$ of size
$m$ includes a clique of size $k$.  Then $n\leq \chi q+\rho$, or 
$\chi \geq (n-\rho)/q$.
\end{lem}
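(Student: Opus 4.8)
The plan is to argue through a proper $\chi$-coloring of $G$ and to translate the $(k,m)$-type hypothesis into a constraint on the sizes of the color classes. Fix a proper coloring of $G$ with exactly $\chi$ colors, and let the color classes be $C_1,\dots,C_\chi$, sorted so that $c_1\ge c_2\ge\cdots\ge c_\chi$, where $c_i=|C_i|$. The hypothesis ``every $m$-subset of $V(G)$ contains a clique of size $k$'' is equivalent to saying that every vertex subset $S$ with $\omega(G[S])\le k-1$ (i.e.\ every $K_k$-free subset) satisfies $|S|\le m-1$. So the whole argument reduces to exhibiting enough large $K_k$-free subsets to force the color classes to be small.

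The key structural observation is that a clique meets each color class in at most one vertex, so any clique contained in a union of $j$ color classes has size at most $j$. Hence the union of any $k-1$ color classes is $K_k$-free. Before using this I would dispose of the degenerate case $\chi\le k-1$: then $\omega(G)\le\chi\le k-1$, so $G$ itself is $K_k$-free, and since $n\ge m$ every $m$-subset is $K_k$-free, contradicting the hypothesis; thus in fact $\chi\ge k$ and the ``top $k-1$ classes'' are well defined. Applying the observation to the $k-1$ largest classes gives the single crucial inequality $c_1+\cdots+c_{k-1}\le m-1$.

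From here I would extract the bound using integrality and the sortedness, rather than a naive averaging estimate. Since $c_{k-1}$ is the smallest of the top $k-1$ sizes, $(k-1)c_{k-1}\le c_1+\cdots+c_{k-1}\le m-1=(k-1)q+\rho$, and because $\rho\le k-2$ we have $(k-1)c_{k-1}<(k-1)(q+1)$, so the integer $c_{k-1}$ satisfies $c_{k-1}\le q$; by sortedness $c_i\le q$ for every $i\ge k-1$. Splitting the total, $\sum_{i=1}^{k-2}c_i=(c_1+\cdots+c_{k-1})-c_{k-1}\le (m-1)-c_{k-1}$, while $\sum_{i=k-1}^{\chi}c_i\le c_{k-1}+(\chi-k+1)q$; adding these, the $c_{k-1}$ terms cancel and $n\le (m-1)+(\chi-k+1)q=(k-1)q+\rho+(\chi-k+1)q=\chi q+\rho$, which is the desired bound.

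The main obstacle is not any heavy computation but getting the \emph{sharp} constant: the obvious move of averaging (``the $k-1$ largest classes have total at least $(k-1)n/\chi$'') loses the integer remainder and yields only the weaker $n\le \chi(m-1)/(k-1)$. The point of the proof is that the remainder condition $0\le\rho\le k-2$ is exactly what forces $c_{k-1}\le q$ after rounding, and the balanced complete multipartite graph (with $\rho$ parts of size $q+1$ and $\chi-\rho$ parts of size $q$) shows the estimate cannot be improved. I would also remark that the argument uses only the coloring and the clique structure, so no perfectness of $G$ is needed here; perfectness enters only afterward, when Lemma~\ref{nlrrho} is combined with Fact~\ref{fac1} and Theorem~\ref{thm:perfect} to convert the bound on $\chi$ into a bound on the agreement number.
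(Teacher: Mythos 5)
Your proof is correct and takes essentially the same route as the paper's: fix a proper $\chi$-coloring, sort the color classes by size, use the fact that the union of the $k-1$ largest classes is $K_k$-free to get $c_1+\cdots+c_{k-1}\le m-1$, deduce $c_{k-1}\le q$ from the remainder condition, and sum. Your explicit handling of the degenerate case $\chi\le k-1$ and the sharpness example are welcome additions, but the core argument is identical.
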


\begin{proof} Let the graph be colored using the colors $1,2,\dots, \chi$,
and for $i=1,2,\dots,\chi$ let $C_i$ be the set of vertices of $G$
colored $i$.  We may assume, by permuting the colors, that $|C_1|\ge
|C_2|\ge\cdots\ge |C_\chi|$.  Since $C_1\cup C_2\cup\cdots \cup
C_{k-1}$ is colored using $k-1$ colors, it includes no clique of
size $k$, and hence, $|C_1\cup C_2\cup\cdots \cup C_{k-1}|\le m-1$.
It follows that $|C_{k-1}|\le q$, for otherwise $|C_1\cup C_2\cup
\cdots \cup C_{k-1}|\ge (k-1)(q+1)\ge (k-1)q+\rho +1=m$, a
contradiction. Thus $|C_i|\leq q$ for each $i\geq k$ and
$$
n=\sum^{k-1}_{i=1}|C_i|+\sum^\chi_{i=k}|C_i|\le m-1+(\chi-k+1)q=(k-1)q+
\rho +(\chi-k+1)q=\chi q+\rho,$$
as desired.
\end{proof}

\begin{thm}
\label{thm:clique} 
Let $2\le k\le m$.  If $G$ is the
agreement graph of a $(k,m)$-agreeable linear society and 
$q, \rho$ are defined by the division with remainder: 
$m-1 =(k-1)q+\rho$, $\rho \leq k-2$, then the clique number satisfies:
$$\omega(G)\geq \lceil (n-\rho)/q\rceil,$$
and this bound is best possible.
It follows that this is also a lower bound on the agreement number,
and hence every linear $(k,m)$-agreeable society has agreement
proportion at least $(k-1)/(m-1)$.
\end{thm}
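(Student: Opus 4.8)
The plan is to derive the lower bound on $\omega(G)$ by chaining the tools assembled in the previous sections, and separately to construct an explicit family of societies demonstrating that it cannot be improved. The construction is where the real work lies; the lower bound is essentially a bookkeeping assembly of results already in hand.

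First I would recast the $(k,m)$-agreeability of the society as a condition on $G$. If every $m$ voters include $k$ who approve a common platform, then those $k$ voters are pairwise adjacent in the agreement graph, so every $m$-element subset of $V(G)$ contains a clique of size $k$. This is precisely the hypothesis of Lemma~\ref{nlrrho}, which gives $\chi(G)\ge (n-\rho)/q$ and hence $\chi(G)\ge \lceil (n-\rho)/q\rceil$ since the chromatic number is an integer. By Fact~\ref{fac2} the agreement graph is an interval graph, so it is perfect by Theorem~\ref{thm:perfect}, whence $\omega(G)=\chi(G)$. Together these give $\omega(G)\ge \lceil (n-\rho)/q\rceil$, and Fact~\ref{fac1} identifies $\omega(G)$ with the agreement number of the society, establishing the first two claims. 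For the agreement proportion I would bound $\omega(G)/n\ge (n-\rho)/(qn)$ and check $(n-\rho)/(qn)\ge (k-1)/(m-1)$ by cross-multiplying; substituting $m-1=(k-1)q+\rho$ collapses the numerator of the difference to $\rho(n-m+1)$, which is nonnegative because $n\ge m$ and $\rho\ge 0$.

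The main obstacle is the \emph{best possible} claim, which requires an explicit extremal society for each $n\ge m$. I would take the agreement graph to be a disjoint union of cliques: form $q$ ``large'' groups of voters, the $j$-th group all approving one common platform $p_j$, with the $p_j$ spaced far enough apart that approval intervals from different groups are disjoint, together with one ``small'' group of $\rho$ voters at a further isolated platform. The $q$ large groups are made as equal as possible so as to partition the remaining $n-\rho$ voters, giving largest group size $\lceil (n-\rho)/q\rceil$. The crucial verification is agreeability: in a disjoint union of cliques a subset containing no $k$ mutually agreeing voters can take at most $k-1$ voters from each large group and all $\rho$ from the small group, for a total of at most $(k-1)q+\rho=m-1$; hence every $m$ voters contain $k$ with a common platform, so the society is $(k,m)$-agreeable. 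One checks, using $n\ge m$, that each large group has size at least $k-1$ (so it really contributes only $k-1$ to that count) and that the small group of size $\rho\le k-2$ forms no clique larger than $\lceil (n-\rho)/q\rceil$. Since by Fact~\ref{fac1} the agreement number equals the size $\lceil (n-\rho)/q\rceil$ of the largest clique, this society attains the bound.
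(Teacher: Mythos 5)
Your proposal is correct and follows essentially the same route as the paper: Lemma~\ref{nlrrho} combined with perfection of interval graphs (Facts~\ref{fac1} and~\ref{fac2}, Theorem~\ref{thm:perfect}) for the lower bound, the identical algebraic identity $(n-\rho)(m-1)=n(k-1)q+\rho(n-m+1)$ for the proportion, and the same extremal construction of $q$ groups of coincident intervals plus $\rho$ leftover voters. The only (immaterial) difference is that you place the $\rho$ extra voters on a common isolated platform, forming a clique of size $\rho\le k-2\le\lceil (n-\rho)/q\rceil$, whereas the paper makes them pairwise disjoint; both choices preserve $(k,m)$-agreeability and the clique number.
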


\begin{proof}
By Fact~\ref{fac2} and Theorem~\ref{thm:perfect} the graph $G$ is
perfect. Thus the chromatic number of $G$ is equal to $\omega (G)$,
and hence $\omega (G)\ge \lceil (n-\rho)/q \rceil$ by Lemma~\ref{nlrrho}, as
desired.  

The second assertion follows from Fact~\ref{fac1} and noting
that 
$(n-\rho)(m-1) = n(k-1)q+n\rho - \rho(m-1)=n(k-1)q + \rho(n-m+1)\geq
n(k-1)q$,
from which we see that $(n-\rho)/q\ge n(k-1)/(m-1)$.

Let us observe that the bound $\lceil (n-\rho)/q\rceil$ in
Theorem~\ref{thm:clique} is best possible.  Indeed, let
$I_1,I_2,\dots, I_q$ be disjoint intervals, for $i=q+1,q+2,\dots,
n-\rho$ let $I_i=I_{i-q}$, and let $I_{n-\rho+1}, I_{n-\rho+2},
\dots, I_n$ be pairwise disjoint and disjoint from all the previous
intervals, e.g., see Figure \ref{fig:optdisjoint}.
Then the society with approval sets $I_1,I_2,\dots, I_n$
is $(k,m)$-agreeable and its agreement graph has clique number
$\lceil (n-\rho)/q\rceil$.
\end{proof}

\begin{figure}[htb]
  \begin{center}
    \scalebox{1}{\includegraphics{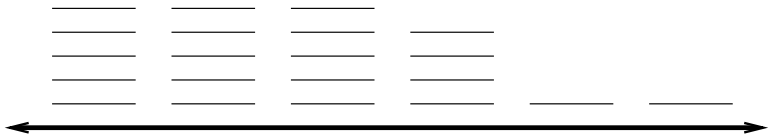}}
  \end{center}
  \caption{A linear $(4, 15)$-society with $n=21$ voters.  Here $q=4$
     and  $\rho=2$, so
     the clique number is at least $\lceil (n-\rho)/q\rceil = 5$.}
  \label{fig:optdisjoint}
\end{figure}

The Agreeable Linear Society Theorem (Theorem
\ref{thm:km-agree}) now follows as a corollary of 
Theorem \ref{thm:clique}.

\section{$\R^d$-convex and $d$-box Societies}%
\label{sec:convsoc} 
In this section we prove a higher-dimensional
analogue of Theorem~\ref{thm:clique} by giving a lower bound on the
agreement proportion of a $(k,m)$-agreeable $\R^d$-convex society.
We need a different method than our method for $d = 1$, because for
$d\ge 2$, neither Fact~\ref{fac1} nor Fact~\ref{fac2} holds. 

Also, we remark that, unlike our results on linear societies,
our results in this section about the agreement
proportion for platforms 
will not necessarily hold when restricting
the spectrum to a finite set of candidates in $\R^d$.

We shall use
the following generalization of Helly's theorem, due to
Kalai~\cite{KalHelly}.

\begin{thm}[The Fractional Helly's Theorem]
\label{fractionalhelly} Let $d\ge 1$ and $n\ge d+1$ be integers, let
$\alpha \in [0,1]$ be a real number, and let $\beta =
1-(1-\alpha)^{1/(d+1)}$.  Let $F_1,F_2,\dots, F_n$ be convex sets in
$\R^d$ and assume that for at least $\alpha\binom n{d+1}$ of the
$(d+1)$-element index sets $I\subseteq \{1,2,\dots, n\}$ we have
$\bigcap_{i\in I} F_i\ne\emptyset$. Then there exists a point in
$\R^d$ contained in at least $\beta n$ of the sets $F_1,F_2,\dots,
F_n$.
\end{thm}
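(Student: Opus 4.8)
The plan is to recast the statement as a combinatorial fact about the \emph{nerve} of the family and then invoke the sharp Upper Bound Theorem for such complexes. First I would form the simplicial complex $K$ on the vertex set $\{1,2,\dots,n\}$ in which a subset $\sigma$ is a face exactly when $\bigcap_{i\in\sigma}F_i\neq\emptyset$. By Helly's theorem a subfamily has a common point if and only if every $d+1$ of its members do, so $\sigma$ is a face of $K$ precisely when all of its $(d+1)$-element subsets are faces; thus $K$ is determined by its $d$-skeleton (in fact $K$ is $d$-collapsible, by Wegner's theorem that nerves of convex sets in $\R^d$ are $d$-collapsible). Under this dictionary the hypothesis says that $K$ has at least $\alpha\binom{n}{d+1}$ faces of dimension $d$, and the desired conclusion is that $K$ has a face with at least $\beta n$ vertices, since a point lying in $r$ of the sets $F_i$ is exactly a face of $K$ on those $r$ vertices.

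Let $t$ denote the maximum number of vertices in a face of $K$, i.e.\ the largest number of the $F_i$ having a common point; the goal is to prove $t\ge\beta n$. The $(d+1)$-element faces of $K$ form a $(d+1)$-uniform hypergraph on $n$ vertices containing no ``clique'' on $t+1$ vertices, for a set of $t+1$ vertices all of whose $(d+1)$-subsets were faces would itself be a face of size $t+1$. Maximizing the number of $d$-faces subject to this constraint is a Turán-type extremal problem, and this is exactly where the geometry must enter: for arbitrary $(d+1)$-uniform hypergraphs the relevant Turán problem is open, but for the $d$-collapsible complex $K$ the extremal configuration is known explicitly, which is what makes the sharp constant attainable.

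The crux is Kalai's Upper Bound Theorem for $d$-collapsible complexes~\cite{KalHelly}, which bounds the number of $d$-faces in terms of $t$:
\[
\#\{d\text{-faces of }K\}\ \le\ \binom{n}{d+1}-\binom{n-t+d}{d+1}.
\]
Combining this with the hypothesis $\#\{d\text{-faces}\}\ge\alpha\binom{n}{d+1}$ gives $\binom{n-t+d}{d+1}\le(1-\alpha)\binom{n}{d+1}$. A direct estimate of the binomial ratio, $\binom{n-t+d}{d+1}\big/\binom{n}{d+1}\approx\bigl(1-t/n\bigr)^{d+1}$, then yields $(1-t/n)^{d+1}\le 1-\alpha$, hence $t/n\ge 1-(1-\alpha)^{1/(d+1)}=\beta$, which is the bound we want; indeed the precise value of $\beta$ is forced by the complex realizing equality in Kalai's estimate (a single large intersecting cluster together with otherwise disjoint sets).

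I expect the main obstacle to be the Upper Bound Theorem itself. The elementary double-counting argument of Katchalski and Liu produces a bound of this shape but only with a non-sharp, non-explicit $\beta$; extracting the optimal exponent $1/(d+1)$ requires the full strength of Kalai's result, whose proof rests on the collapsibility of nerves of convex sets together with an upper-bound computation for the face numbers of collapsible complexes. Once that theorem is in hand, everything that remains is the routine binomial estimate carried out above.
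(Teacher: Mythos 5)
The paper does not prove this statement at all: it is imported verbatim from Kalai \cite{KalHelly} as a known black box, so there is no internal proof to compare yours against. Your outline is a faithful sketch of how the sharp version is actually established in the literature: pass to the nerve $K$, observe via Helly (or Wegner's $d$-collapsibility) that $K$ is controlled by its $d$-skeleton, translate the hypothesis into a lower bound on the number of $d$-faces and the conclusion into the existence of a large face, and then invoke the Upper Bound Theorem for $d$-collapsible complexes in the form $f_d(K)\le\binom{n}{d+1}-\binom{n-t+d}{d+1}$, where $t$ is the maximum face size. That inequality is stated in the correct form (for $d=1$ it reduces to the tight edge bound $n(t-1)-\binom{t}{2}$ for interval graphs with clique number $t$), and your final step is not merely an approximation: each factor satisfies $\frac{n-t+d-j}{n-j}\ge\frac{n-t}{n}$ for $0\le j\le d$ because $nd\ge jt$, so $\binom{n-t+d}{d+1}\big/\binom{n}{d+1}\ge(1-t/n)^{d+1}$ exactly, which combined with $\binom{n-t+d}{d+1}\le(1-\alpha)\binom{n}{d+1}$ gives $t\ge\beta n$ rigorously. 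The one honest caveat, which you yourself flag, is that this is a reduction rather than a proof: essentially all of the depth lives in Kalai's Upper Bound Theorem (and in Wegner's collapsibility result), which you quote without proof. Since the paper likewise quotes the full theorem without proof, your write-up is strictly more informative than the source, but it should be presented as a derivation from the UBT, not as a self-contained argument.
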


The following is the promised analogue of Theorem~\ref{thm:clique}.

\begin{thm}
\label{dconvex}
Let $d\ge 1$, $k\ge 2$ and $m\ge k$ be integers, where $m>d$.  Then every
$(k,m)$-agreeable $\R^d$-convex society has agreement proportion at
least $1-\left(1-\binom k{d+1}\big\slash\binom
m{d+1}\right)^{1/(d+1)}$.
\end{thm}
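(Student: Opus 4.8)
The plan is to apply the Fractional Helly Theorem (Theorem~\ref{fractionalhelly}) to the approval sets $F_1,\dots,F_n$ of the society, which by hypothesis are convex subsets of $\R^d$. To that end I would set $\alpha=\binom{k}{d+1}\big\slash\binom{m}{d+1}$ and $\beta=1-(1-\alpha)^{1/(d+1)}$, matching the quantities in that theorem, so that the target agreement proportion is exactly $\beta$. The real work is to verify the hypothesis of Fractional Helly, namely that at least $\alpha\binom{n}{d+1}$ of the $(d+1)$-element index sets $I\subseteq\{1,\dots,n\}$ satisfy $\bigcap_{i\in I}F_i\ne\emptyset$. Note that when $k\le d$ we have $\binom{k}{d+1}=0$ and the asserted bound is vacuous, so I may assume $k\ge d+1$; also the society has $n\ge m$ voters, and since $m>d$ we get $n\ge m\ge d+1$, as needed both for the counting below and for invoking Fractional Helly.

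The heart of the argument is a double-counting estimate that converts the local $(k,m)$-agreeability condition into a global count of intersecting $(d+1)$-tuples. Call a $(d+1)$-element index set \emph{good} if the corresponding approval sets have a common point, and let $g$ be the number of good sets. For any $m$-element set $M$ of voters, $(k,m)$-agreeability produces a $k$-element subset $K\subseteq M$ whose approval sets share a platform; since any subfamily of an intersecting family is intersecting, each of the $\binom{k}{d+1}$ choices of a $(d+1)$-subset of $K$ is good, so $M$ contains at least $\binom{k}{d+1}$ good $(d+1)$-sets. Counting the pairs $(M,S)$ with $|M|=m$, $|S|=d+1$, $S\subseteq M$, and $S$ good, from the $M$ side gives at least $\binom{n}{m}\binom{k}{d+1}$ pairs, while from the $S$ side it gives exactly $g\binom{n-d-1}{m-d-1}$, since each good $S$ lies in $\binom{n-d-1}{m-d-1}$ such $M$. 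Hence
$$g\binom{n-d-1}{m-d-1}\ \ge\ \binom{n}{m}\binom{k}{d+1}.$$

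To finish, I would invoke the subset-of-a-subset identity $\binom{n}{m}\binom{m}{d+1}=\binom{n}{d+1}\binom{n-d-1}{m-d-1}$, which rewrites the displayed inequality (after dividing by the positive quantity $\binom{n-d-1}{m-d-1}$) as $g\ge\binom{n}{d+1}\binom{k}{d+1}\big\slash\binom{m}{d+1}=\alpha\binom{n}{d+1}$. The Fractional Helly Theorem then supplies a point of $\R^d$ lying in at least $\beta n$ of the approval sets, that is, a platform whose agreement number is at least $\beta n$, which is precisely the claimed agreement proportion $1-\bigl(1-\binom{k}{d+1}\big\slash\binom{m}{d+1}\bigr)^{1/(d+1)}$. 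I expect the only delicate points to be the bookkeeping in the double count---making sure that extracting at least one witnessing $k$-clique per $m$-set legitimately yields the per-$m$-set lower bound of $\binom{k}{d+1}$ good $(d+1)$-sets---and the clean algebraic collapse furnished by the binomial identity. Neither is deep, so the genuine substance of the theorem resides in Kalai's Fractional Helly Theorem, which we are permitted to assume.
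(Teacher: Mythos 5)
Your proposal is correct and follows essentially the same argument as the paper: both verify the hypothesis of Kalai's Fractional Helly Theorem by double-counting pairs consisting of a good $(d+1)$-set and an $m$-set containing it, using $(k,m)$-agreeability to extract $\binom{k}{d+1}$ good subsets from each $m$-set, and then applying the identity $\binom{n}{m}\binom{m}{d+1}=\binom{n}{d+1}\binom{n-d-1}{m-d-1}$. Your additional remarks on the degenerate case $k\le d$ and the requirement $n\ge d+1$ are harmless and correct.
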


\begin{proof}
Let $S$ be a $(k,m)$-agreeable $\R^d$-convex society, and let
$A_1,A_2,\dots, A_n$ be its voter approval sets. Let us call a set
$I\subseteq \{1,2,\dots, n\}$ \emph{good} if $|I|=d+1$ and
$\bigcap_{i\in I}A_i\ne\emptyset$.  By Theorem~\ref{fractionalhelly}
it suffices to show that there are at least 
$\binom k{d+1}\binom n{d+1}\big\slash \binom m{d+1}$ 
good sets.  We will do this by counting
in two different ways the number $N$ of all pairs $(I,J)$, where
$I\subseteq J\subseteq \{1,2,\dots, n\}$, $I$ is good, and $|J|=m$.
Let $g$ be the number of good sets.  Since every good set 
is of size $d+1$ and extends to
an $m$-element subset of $\{1,2,\dots, n\}$ in
$\binom{n-d-1}{m-d-1}$ ways, we have $N=g\binom{n-d-1}{m-d-1}$.  On
the other hand, every $m$-element set $J\subseteq\{1,2,\dots, n\}$
includes at least one $k$-element set $K$ with $\bigcap_{i\in K}
A_i \ne\emptyset$ (because $S$ is $(k,m)$-agreeable), and $K$ in turn
includes $\binom k{d+1}$ good sets.  Thus $N\ge \binom k{d+1}\binom
nm$, and hence $g\ge \binom k{d+1}\binom n{d+1}\big\slash \binom
m{d+1}$, as desired.
\end{proof}

For $d=1$, Theorem~\ref{dconvex} gives a worse bound than
Theorem~\ref{thm:clique}, and hence for $d\ge 2$, the bound is most
likely not best possible.  However, a possible improvement must use
a different method, because the bound in
Theorem~\ref{fractionalhelly} \emph{is} best possible.

A \emph{box} in $\R^d$ is the Cartesian product of $d$ closed
intervals, and we say that a society is a $d$-\emph{box society} if
each of its approval sets is a box in $\R^d$.  By projection onto each
axis, it follows from
Theorem~\ref{cor:super-agreeable} 
that $d$-box societies satisfy the conclusion of 
Fact~\ref{fac1} (namely, that the clique number equals the agreement
number), and hence their agreement graphs capture all the
essential information about the society.  Unfortunately, agreement
graphs of $d$-box societies are, in general, not perfect.  For
instance, there is a 2-box society 
(Figure \ref{fig:5cycle}) whose agreement graph is the
cycle on five vertices; hence its chromatic number is 3 but its clique
number is 2.

For $k\le m\le 2k-2$, the 
following theorem
will resolve the agreement proportion problem for all
$(k,m)$-agreeable societies satisfying the conclusion of 
Fact~\ref{fac1}, and hence for all $(k,m)$-agreeable $d$-box societies 
where $d\ge 1$ (Theorem \ref{dbox}).

\begin{figure}[htb]
  \begin{center}
    \scalebox{.50}{\includegraphics{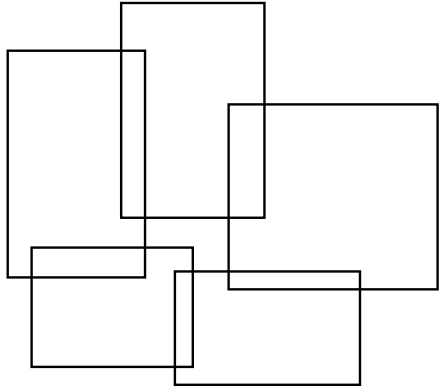}}
  \end{center}
  \caption{A 2-box society whose agreement graph is a 5-cycle.}
  \label{fig:5cycle}
\end{figure}

\begin{thm}\label{smallm}
Let $m,k\ge 2$ be integers with $k\le m\le 2k-2$, and let $G$ be a
graph on $n\ge m$ vertices such that every subset of $V(G)$ of size
$m$ includes a clique of size $k$.  Then $\omega (G)\ge n-m+k$.
\end{thm}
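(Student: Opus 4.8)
The plan is to argue by a minimal counterexample and then reduce the statement, via complementation, to a clean lemma about maximum independent sets that can be settled with matching theory.

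First I would set up the minimal‑counterexample reduction. Suppose the theorem fails and let $G$ be a counterexample on the fewest vertices $n$; thus every $m$‑subset of $V(G)$ contains a $k$‑clique while $\omega(G)\le n-m+k-1$. If $n=m$ then $V(G)$ itself yields a $k$‑clique and $\omega(G)\ge k=n-m+k$, so in fact $n>m$. For any vertex $v$, the graph $G-v$ still has at least $m$ vertices and inherits the hypothesis (every $m$‑subset of $G-v$ is an $m$‑subset of $G$), so by minimality $\omega(G-v)\ge(n-1)-m+k=n-m+k-1$. Combined with $\omega(G)\le n-m+k-1$ this forces $\omega(G)=n-m+k-1$ and $\omega(G-v)=\omega(G)$ for every $v$; equivalently, every vertex of $G$ lies outside some maximum clique.

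Next I pass to the complement $H=\overline G$. Maximum cliques of $G$ are exactly maximum independent sets of $H$, so $\alpha(H)=\omega(G)=n-m+k-1$, and the property just derived says that every vertex of $H$ is missing from some maximum independent set, i.e. $\alpha(H-v)=\alpha(H)$ for all $v$. The crux is then the following Lemma: \emph{if $\alpha(H-v)=\alpha(H)$ for every vertex $v$, then $2\alpha(H)\le|V(H)|$.} Granting it, $2(n-m+k-1)\le n$, so $n\le 2(m-k+1)$; and because $m\le 2k-2$ we have $2(m-k+1)=2m-2k+2\le m$, giving $n\le m$, contradicting $n>m$. This contradiction proves the theorem, so all the real content is in the Lemma, and that is the main obstacle.

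To prove the Lemma I would use matching theory; the hypothesis says precisely that no vertex lies in \emph{every} maximum independent set. Such an $H$ has no isolated vertex (an isolated vertex lies in all maximum independent sets), and since the hypothesis is preserved on connected components while the bound is additive, it suffices to treat one component. I would invoke the Gallai--Edmonds structure theorem to write $V(H)=D\cup\mathcal A\cup C$, where every component of $H[D]$ is factor‑critical, $H[C]$ has a perfect matching, $N(D)\subseteq D\cup\mathcal A$, and $\mathcal A$ meets no maximum independent set; consequently every maximum independent set lies in $D\cup C$ and meets each component of $H[D]$ and of $H[C]$ in a maximum independent set of that piece (so $\alpha(H)=\sum_{F}\alpha(F)+\alpha(H[C])$ over the components $F$ of $H[D]$). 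In particular a single‑vertex component of $H[D]$ would lie in every maximum independent set, which the hypothesis forbids; hence each component of $H[D]$ is factor‑critical on an odd number $2\ell+1\ge 3$ of vertices, and deleting a vertex outside a given independent set (whose removal exposes a perfect matching) shows $\alpha(F)\le\ell<|V(F)|/2$. A perfect matching of $H[C]$ likewise gives $\alpha(H[C])\le|C|/2$, since an independent set uses at most one end of each matching edge. Summing these bounds over the pieces yields $\alpha(H)\le|V(H)|/2$, completing the Lemma and hence the theorem. (An equivalent route is to show directly that such an $H$ admits a fractional perfect matching, whence $\alpha(H)\le|V(H)|/2$.)
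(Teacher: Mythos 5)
Your reduction of the theorem to a key lemma is correct and, perhaps without your realizing it, lands on essentially the same lemma as the paper. A minimal counterexample forces $\omega(G\backslash\{v\})=\omega(G)=n-m+k-1$ for every vertex $v$; in the complement this says every vertex of $\bar G$ is avoided by some maximum independent set while $\alpha(\bar G)=n-m+k-1$; and your lemma --- if $\alpha(H\backslash\{v\})=\alpha(H)$ for all $v$ then $2\alpha(H)\le |V(H)|$ --- is exactly the paper's Lemma~\ref{coverlemma} translated from vertex covers to independent sets (a vertex cover of size $z$ is the complement of an independent set of size $|V(H)|-z$). The closing arithmetic $2(n-m+k-1)\le n$, hence $n\le 2(m-k+1)\le m$, is the same use of $m\le 2k-2$ as in the paper. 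So the architecture is sound; the question is whether you have actually proved the lemma.

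You have not. Your Gallai--Edmonds argument rests on the assertion that $\mathcal A$ ``meets no maximum independent set,'' which you present as part of the structure theorem. It is not: Gallai--Edmonds says that every maximum \emph{matching} matches $\mathcal A$ into distinct components of $H[D]$; it says nothing about maximum \emph{independent sets}, and the assertion is false for general graphs. For instance, take three disjoint triangles and a new vertex $v$ joined to one vertex of each: there $D$ is the union of the triangles, $\mathcal A=\{v\}$, and $v$ lies in \emph{every} maximum independent set. That example does not satisfy your hypothesis, but it shows the property you need is not a feature of the decomposition and must be derived from the hypothesis that no vertex lies in all maximum independent sets --- and that derivation is precisely the missing content, since everything downstream (the decomposition of $\alpha(H)$ over the pieces, and the exclusion of singleton components of $H[D]$) depends on it. The parenthetical alternative has the same status: that such an $H$ admits a fractional perfect matching is another unproved claim of comparable depth. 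For comparison, the paper proves Lemma~\ref{coverlemma} by an elementary Hall-type count: for each $v\notin Z$ the stable set $X_v=Z-Z_v$ satisfies $v\in N(X_v)$ and $|X_v|\ge |N(X_v)|$, every stable $X\subseteq Z$ satisfies $|N(X)|\ge |X|$, and a union bound yields $|Z|\ge |V(G)|-|Z|$. To complete your proof you should either establish the claim about $\mathcal A$ from the hypothesis or replace the Gallai--Edmonds step with a direct argument of that kind.
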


Before we embark on a proof let us make a few comments.  First of
all, the bound $n-m+k$ is best possible, as shown by the graph
consisting of a clique of size $n-m+k$ and $m-k$ isolated vertices.
Second, the conclusion $\omega (G)\ge n-m+k$ implies that every
subset of $V(G)$ of size $m$ includes a clique of size $k$, and so
the two statements are equivalent under the hypothesis that $k\le
m\le 2k-2$.  Finally, this hypothesis is necessary, because if $m\ge
2k-1$, then for $n\ge 2(m-k+1)$, the disjoint union of cliques of
sizes $\lfloor n/2\rfloor$ and $\lceil n/2\rceil$ satisfies the
hypothesis of Theorem~\ref{smallm}, but not its conclusion.

A \emph{vertex cover} of a graph $G$ is a set $Z\subseteq V(G)$ such
that every edge of $G$ has at least one end in $Z$.  We say a set
$S\subseteq V(G)$ is \emph{stable} if no edge of $G$ has both ends
in $S$. We deduce Theorem~\ref{smallm} from the following lemma.

\begin{lem}\label{coverlemma}
Let $G$ be a graph with vertex cover of size $z$ and none of size $z-1$ such that
$G\backslash\{v\}$ has a vertex cover of size at most $z-1$ for all
$v\in V(G)$. Then $|V(G)|\le 2z$.
\end{lem}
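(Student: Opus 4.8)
The plan is to first translate the hypotheses into the language of independence and criticality, and then to realize the conclusion as the existence of a matching. For any vertex $v$, adding $v$ to a minimum vertex cover of $G\setminus\{v\}$ produces a vertex cover of $G$, so $\tau(G\setminus\{v\})\ge \tau(G)-1$, where $\tau$ denotes the minimum size of a vertex cover. Since by hypothesis $\tau(G)=z$ and $\tau(G\setminus\{v\})\le z-1$, we get $\tau(G\setminus\{v\})=z-1$ for every $v$; equivalently, every vertex of $G$ lies in some minimum vertex cover, and by complementation (a set is a vertex cover exactly when its complement is stable) every vertex is missed by some maximum stable set, while $\alpha(G)=|V(G)|-z$. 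Because $|V(G)|\le 2z$ is equivalent to $\alpha(G)\le z=\tau(G)$, it suffices to fix a maximum stable set $S$, let $Z=V(G)\setminus S$ be the corresponding minimum vertex cover, and injectively map $S$ into $Z$.

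Next I would build the bipartite graph $H$ with parts $S$ and $Z$ whose edges are the edges of $G$ joining $S$ to $Z$ (every edge of $G$ incident with $S$ is of this form, since $S$ is stable). A matching of $H$ saturating $S$ is exactly such an injection, so the whole lemma reduces to verifying Hall's condition: $|N_G(A)|\ge |A|$ for every $A\subseteq S$, where $N_G(A)\subseteq Z$.

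The crux, and the step I expect to be the main obstacle, is deducing Hall's condition from criticality, since criticality is a global statement while Hall's condition is local. I would argue by contradiction: if Hall fails, choose an inclusion-minimal violator $A\subseteq S$, so that $B:=N_G(A)$ satisfies $|B|=|A|-1$ while every proper subset $A'\subsetneq A$ obeys $|N_G(A')|\ge|A'|$. The key observation is then that any stable set $J$ of the induced subgraph $G[A\cup B]$ omitting at least one vertex of $A$ has size at most $|B|=|A|-1$: writing $J=J_A\cup J_B$ with $J_A\subsetneq A$, stability forces $J_B\subseteq B\setminus N_G(J_A)$, so $|J|\le |J_A|+|B|-|N_G(J_A)|\le |B|$ by Hall for the proper subset $J_A$. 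Now I invoke criticality: pick $a_0\in A$ and a maximum stable set $T$ with $a_0\notin T$. Then $T\cap(A\cup B)$ is a stable set of $G[A\cup B]$ missing $a_0$, hence has size at most $|A|-1$. Finally, replacing the part of $T$ in $B$ by all of $A$ yields $T':=(T\setminus B)\cup A$, which is stable (no vertex of $A$ has a neighbour outside $B$, so no edges appear between $A$ and $T\setminus B$) and satisfies $|T'|=|T|+|A|-|T\cap(A\cup B)|\ge |T|+1$, contradicting the maximality of $T$. Hence Hall's condition holds, $H$ has a matching saturating $S$, and $|V(G)|=|S|+|Z|\le 2|Z|=2z$.
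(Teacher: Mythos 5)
Your proof is correct, and it reaches the lemma by a genuinely different route from the paper's. You pass to the complement: fix a maximum stable set $S$ (of size $|V(G)|-z$, by the Gallai identity) with minimum cover $Z=V(G)\setminus S$, reduce the claim to Hall's condition for the bipartite graph between $S$ and $Z$, and verify Hall's condition by contradiction via an inclusion-minimal violator $A$ together with an exchange argument: criticality supplies a maximum stable set $T$ missing a chosen $a_0\in A$, and $(T\setminus N(A))\cup A$ is then a strictly larger stable set. All the steps check out (in particular, the minimal violator has deficiency exactly one, $N(A)\subseteq Z$ because $S$ is stable, and the size count $|T'|=|T|+|A|-|T\cap(A\cup B)|\ge|T|+1$ is right). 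The paper works instead on the cover side: it fixes a minimum cover $Z$ and, for each $v\notin Z$, uses the cover $Z_v$ of $G\setminus\{v\}$ to produce a stable set $X_v=Z-Z_v\subseteq Z$ with $|X_v|\ge|N(X_v)|$ and $v\in N(X_v)$, while minimality of $Z$ forces $|N(X)|\ge|X|$ for every stable $X\subseteq Z$; a short submodularity induction on $|\bigcup X_v|\ge|\bigcup N(X_v)|$ then gives $|Z|\ge|V(G)|-|Z|$. Both arguments are Hall-flavored, but the paper's is self-contained (no appeal to Hall's theorem) and uses criticality only through the sets $Z-Z_v$, whereas yours uses criticality through a stable-set augmentation and then cites Hall. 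One small economy available to you: once Hall's condition holds for every $A\subseteq S$, the case $A=S$ already gives $|S|\le|N(S)|\le|Z|$, so the matching itself is not actually needed.
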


\begin{proof} Let $Z$ be a vertex cover of $G$ of size $z$.
For every $v\in V(G)-Z$ let $Z_v$ be a vertex cover of $G\backslash
\{v\}$ of size $z-1$, and let $X_v=Z-Z_v$.  Then $X_v$ is a stable set.
For $X\subseteq Z$ let $N(X)$ denote the set of neighbors of $X$
outside $Z$.  We have $v\in N(X_v)$ and $N(X_v)-\{v\}\subseteq
Z_v-Z$, and so
$$|X_v|=|Z-Z_v| = |Z|-|Z\cap Z_v|=|Z_v|+1-|Z\cap Z_v|=
|Z_v-Z|+1\ge |N(X_v)|.$$
On the other hand, if $X\subseteq Z$ is stable, then $|N(X)|\ge |X|$,
for otherwise $(Z-X)\cup N(X)$ is a vertex cover of $G$ of size
at most $z-1$, a contradiction.  We have
\begin{equation}\label{eq:contra}
|Z|\ge |\bigcup X_v|\ge |\bigcup N(X_v)|\ge |V(G)|-|Z|,
\end{equation}
where both unions are over all $v\in V(G)-Z$, and hence $|V(G)|\le 2z$,
as required.  To see that the second inequality holds let $u,v\in V(G)-Z$.
Then
\begin{align*}
|X_u\cup X_v|&=|X_u|+|X_v|-|X_u\cap X_v|\ge
|N(X_u)|+|N(X_v)|-|N(X_u\cap X_v)|\\
&\ge |N(X_u)| + |N(X_v)|-|N(X_u)\cap N(X_v)|=|N(X_u)\cup N(X_v)|,
\end{align*}
and, in general, the second inequality of \eqref{eq:contra}
follows by induction on $|V(G)-Z|$.
\end{proof}

\noindent{\em Proof of Theorem~\ref{smallm}.}  We proceed by
induction on $n$.  If $n=m$, then the conclusion certainly holds,
and so we may assume that $n\ge m+1$ and that the theorem holds for
graphs on fewer than $n$ vertices.  We may assume that $m>k$, for
otherwise the hypothesis implies that $G$ is the complete graph.  We
may also assume that $G$ has two nonadjacent vertices, say $x$ and
$y$, for otherwise the conclusion holds. 
Then in $G$, every clique contains at most one of $x,y$, so 
in the graph $G\backslash
\{x,y\}$ every set of vertices of size $m-2$ includes a clique of
size $k-1$. Since $k-1\le m-2\le 2(k-1)-2$ we deduce by 
the inductive hypothesis 
that $\omega (G)\ge \omega (G\backslash \{x,y\})\ge
n-2-(m-2)+k-1=n-m+k-1$.  We may assume in the last statement 
that equality holds
throughout, because otherwise $G$ satisfies the conclusion of the
theorem. Let $\bar G$ denote the complement of $G$; that is, the
graph with vertex set $V(G)$ and edge set consisting of precisely
those pairs of distinct vertices of $G$ that are not adjacent in
$G$. Let us notice that a set $Q$ is a clique in $G$ if and only if
$V(G)-Q$ is a vertex cover in $\bar G$. 
Thus $\bar G$ has a vertex cover of size $m-k+1$ and none of size $m-k$. 
Let $t$ be the least integer such that $t\ge m$ and $\bar G$ has an 
induced subgraph $H$ on $t$ vertices with no vertex cover of size $m-k$. 
We claim that $t=m$. 
Indeed, if $t>m$, then the minimality of $t$ implies that $H\backslash \{v\}$ 
has a vertex cover of size at most $m-k$ for every $v\in V(H)$. 
Thus by Lemma \ref{coverlemma}
$t=|V(H)|\le 2(m-k+1)\le m<t$, a contradiction. Thus $t=m$.
By hypothesis, the graph $\bar H$ has a clique $Q$ of size
$k$, but $V(H)-Q$ is a vertex cover of $H$ of size $m-k$, a
contradiction.~\hfill$\Box$

\begin{thm}
\label{dbox}
Let $d\ge 1$ and $m,k\ge 2$ be integers with $k\le m\le 2k-2$, and
let $S$ be a $(k,m)$-agreeable $d$-box society with $n$ voters. Then
the agreement number of $S$ is at least $n-m+k$, and this bound is
best possible.
\end{thm}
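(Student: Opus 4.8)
The plan is to reduce everything to Theorem~\ref{smallm}, which already carries the combinatorial weight, and then to translate the resulting graph statement back into the language of agreement numbers using the special structure of box societies. First I would let $G$ be the agreement graph of the given $(k,m)$-agreeable $d$-box society $S$: its vertices are the $n$ voters, and two voters are adjacent exactly when their boxes intersect. The initial step is to check that $G$ satisfies the hypothesis of Theorem~\ref{smallm}. Given any $m$ voters, $(k,m)$-agreeability produces a platform approved by some $k$ of them; those $k$ boxes share that common point, hence pairwise intersect, and so form a clique of size $k$ in $G$. Thus every $m$-element subset of $V(G)$ contains a clique of size $k$.

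Next, since $k\le m\le 2k-2$ and $n\ge m$, Theorem~\ref{smallm} applies directly and yields $\omega(G)\ge n-m+k$. To convert this clique bound into a bound on the agreement number, I would invoke the observation (recorded just before Fact~\ref{fac1}, and justified for box societies by projecting onto each coordinate axis and applying Theorem~\ref{cor:super-agreeable}) that in a $d$-box society the clique number of $G$ equals the agreement number $a(S)$. Concretely, a clique of $G$ is a family of pairwise-intersecting boxes, and applying the one-dimensional Helly property of Theorem~\ref{cor:super-agreeable} to each coordinate shows that such a family has a common point; hence a clique of size $\omega(G)$ certifies a platform approved by $\omega(G)$ voters. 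Therefore $a(S)=\omega(G)\ge n-m+k$, which is the claimed lower bound.

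For the ``best possible'' assertion I would exhibit an extremal society realizing equality, mirroring the extremal graph mentioned in the remark following Theorem~\ref{smallm}. Take $n-m+k$ identical boxes, all containing a common point $p$, together with $m-k$ further boxes chosen pairwise disjoint and disjoint from the first group (possible in $\R^d$ for every $d\ge 1$, e.g.\ by separating them along a single coordinate). The agreement graph is then a clique of size $n-m+k$ together with $m-k$ isolated vertices, so its clique number, and hence the agreement number, is exactly $n-m+k$. It remains only to verify $(k,m)$-agreeability: any choice of $m$ of the $n$ boxes can include at most the $m-k$ isolated boxes, hence at least $m-(m-k)=k$ of the boxes through $p$, and these $k$ boxes all agree on $p$; thus every $m$-subset of voters contains $k$ who share a platform.

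Since Theorem~\ref{smallm} is already established, I expect no serious obstacle. The one point deserving care is the box-society equivalence between a clique of $G$ and a family of boxes with a genuine common point: this is precisely where the coordinatewise use of the one-dimensional Helly property (Theorem~\ref{cor:super-agreeable}) enters. It is also why the argument does not extend to general $\R^d$-convex societies, for which a clique in the agreement graph need not correspond to a commonly approved platform, as the three-voter $\R^2$-convex example following Fact~\ref{fac1} illustrates.
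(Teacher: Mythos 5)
Your proposal is correct and follows essentially the same route as the paper: apply Theorem~\ref{smallm} to the agreement graph, use the box-society form of Fact~\ref{fac1} (coordinatewise reduction to Theorem~\ref{cor:super-agreeable}) to turn the clique bound into an agreement-number bound, and realize the extremal graph (a clique of size $n-m+k$ plus $m-k$ isolated vertices) as a $d$-box society for sharpness. Your version simply spells out the extremal construction and the $(k,m)$-agreeability check that the paper leaves implicit.
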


\begin{proof}
The agreement graph $G$ of $S$ satisfies the hypothesis of
Theorem~\ref{smallm}, and hence it has a clique of size at least
$n-m+k$ by that theorem.  Since $d$-box societies satisfy the
conclusion of Fact~\ref{fac1}, the first assertion follows.
The bound is best possible, because the graph consisting of a clique
of size $n-m+k$ and $m-k$ isolated vertices is an interval graph.
\end{proof}

\section{Discussion}%
\label{sec:conc}

As we have seen, set intersection theorems can provide
a useful framework to model and understand the relationships
between sets of preferences in voting, and this context leads to new mathematical questions.
We suggest several directions which the reader may wish to explore.

Recent results in discrete geometry have social
interpretations.  The piercing number \cite{katchalski} of approval
sets can be interpreted as the minimum number of platforms that are
necessary such that everyone has some platform of which he or she
approves.  
Set intersection theorems on other spaces (such as trees and cycles)
are derived in \cite{NRSu06} as an extension of both Helly's theorem
and the KKM lemma \cite{kkm};
as an application the authors show that in a super-agreeable society with a circular political spectrum, there must be a platform that has the approval of a strict majority of voters (in contrast with Theorem \ref{cor:super-agreeable}).  Chris Hardin \cite{hardin}
has recently provided a generalization to $(k,m)$-agreeable societies
with a circular political spectrum.

What results can be obtained for other spectra?
The most natural problem seems to be to determine the agreement
proportion for $\R^d$-convex and $d$-box $(k,m)$-agreeable
societies. The smallest case where we do not know the answer is
$d=2$, $k=2$, and $m=3$.  Rajneesh Hegde (private communication)
found an example of a $(2,3)$-agreeable $2$-box society with
agreement proportion $3/8$.
There may very well be a nice formula, because
 for every fixed integer $d$ the agreement number 
of a $d$-box society can be computed in polynomial time.
This is because the clique number of the corresponding agreement graph
(also known as a graph of {\em boxicity} at most $d$) can be determined by an easy 
polynomial-time algorithm.
On the other hand, for every $d\ge2$ it is NP-hard to decide whether 
an input graph has boxicity at most $d$~\cite{krat,Yan}. 
(For $d=1$ this is the same as testing whether a graph is an interval
graph, and that can be done efficiently.)

Passing from results about platforms in societies to results about
a finite set of candidates appears to be tricky in dimensions
greater than 1.  Are there techniques or additional hypotheses that
would give useful results about the existence of candidates who have
strong approval in societies with multi-dimensional spectra? 

We may also question our assumptions.  
While convexity seems to be a rational assumption for
approval sets in the linear case, 
in multiple dimensions additional considerations may become important.
One might also explore the possibility of disconnected approval sets:
what is the agreement proportion of a $(k,m)$-agreeable society in which
every approval set has at most two components?

One might also consider varying levels of agreement.  For instance, in a $d$-box society,
two voters might not agree on every axis, so their approval sets do not intersect, 
but it might be the case that many of the projections of their approval sets do.  In
this case, one may wish to consider an agreement graph with weighted
edges.

Finally, we might wonder about the agreement parameters $k$ and $m$
for various real-world issues.  For instance, a
society considering outlawing murder would probably be much more
agreeable than that same society considering tax reform.  
Currently, we can empirically measure these
parameters only by surveying large numbers of people about their
preferences.  It is interesting to speculate about methods for
estimating suitable $k$ and $m$ from limited data.

This article grew out of the observation that Helly's theorem, a
classical result in convex geometry, has an interesting voting
interpretation.
This led to the development of mathematical questions and theorems whose 
interpretations yield
desirable conclusions in the voting context, e.g., Theorems
\ref{thm:km-agree}, 
\ref{thm:clique}, \ref{dconvex}, and \ref{dbox}. 
It is nice to see that when classical theorems 
have interesting social interpretations, the social context can also
motivate the study of new mathematical questions.

\section{Acknowledgements}
The authors wish to thank a referee for many valuable suggestions,
and to the \textsc{Monthly} Editor for careful reading and detailed
comments that improved the presentation of the paper.
The authors gratefully acknowledge support
 by these NSF Grants:
Berg by DMS-0301129, Norine by DMS-0200595, Su by DMS-0301129 and DMS-0701308,
Thomas by DMS-0200595 and 0354742.


\begin{footnotesize}

\bibliographystyle{plain}

\bigskip
\bigskip

\noindent
\textbf{\uppercase{Deborah E. Berg}} received a B.S. in Mathematics from Harvey Mudd College in 2006    
and an M.S. in Mathematics from the University of Nebraska-Lincoln in 2008.Ê She is currently working towards her Ph.D. in Mathematics Education.Ê When      
not busy studying, doing graph theory, or teaching, she spends her time         
practicing Shotokan karate.\\
\textit{Department of Mathematics, University of Nebraska, Lincoln NE 68588}\\
  {\tt debbie.berg@gmail.com}
  
  \bigskip
  
\noindent
\textbf{\uppercase{Serguei Norine}} received his M.Sc. in mathematics from St.~Petersburg            
State University, Russia in 2001, and Ph.D. from Algorithms,                    
Combinatorics and Optimization program at Georgia Institute of                  
Technology in 2005. After a brief stint in finance following his                
graduation, he returned to academia and is now an instructor at                 
Princeton University. His interests include graph theory and combinatorics.\\
  \textit{Department of Mathematics, Fine Hall, Washington Road, Princeton NJ, 08544}\\
  {\tt snorin@math.princeton.edu}

  \bigskip

\noindent
\textbf{\uppercase{Francis Edward Su}} 
received his Ph.D. at Harvard in 1995, and is now a Professor of Mathematics at Harvey Mudd College.  From the MAA, he received the 2001 Merten M. Hasse Prize for his writing and the 2004 Henry L. Alder Award for his teaching, and he was the 2006 James R.C. Leitzel Lecturer.  
He enjoys making connections between his research in geometric combinatorics and applications to the social sciences.  He also authors the popular
\textit{Math Fun Facts} website.\\
  \textit{Department of Mathematics, Harvey Mudd College, Claremont CA 91711}\\ 
  {\tt su@math.hmc.edu}

  \bigskip

\noindent
\textbf{\uppercase{Robin Thomas}} received his Ph.D. from Charles University in Prague,                
formerly Czechoslovakia, now the Czech Republic. He has worked                  
at the Georgia Institute of Technology since 1989. Currently he                 
is Professor of Mathematics and Director of the multidisciplinary               
Ph.D. program in Algorithms, Combinatorics and Optimization. In                   
1994 he won, jointly with Neil Robertson and Paul Seymour, the
 D. Ray Fulkerson prize in Discrete Mathematics.\\
  \textit{School of Mathematics, Georgia Institute of Technology, Atlanta, Georgia 30332}\\
  {\tt thomas@math.gatech.edu}

  \bigskip

\noindent
\textbf{\uppercase{Paul~Wollan}} received his Ph.D. in Georgia Tech's interdisciplinary              
Algorithms, Combinatorics, and Optimization program in 2005.  He spent a        
year as a post doc at the University of Waterloo, and he is currently a         
Humboldt Research Fellow at the University of Hamburg.\\
  \textit{Mathematisches Seminar der Universit\"at Hamburg, Bundesstr. 55, D-20146 Hamburg
Germany}\\
  {\tt wollan@math.uni-hamburg.de}

\end{footnotesize}

\end{document}